\newcommand{\Nat}{{\mathbb N}}
\newcommand{\Diff}{{\mathbb D}}
\newtheorem{prop}{Proposition}
\newtheorem{theorem}{Theorem}
\newtheorem{lemma}{Lemma}
\begin{document}

\title{\bf FAMILIES OF GRAPH-DIFFERENT HAMILTON PATHS\footnote{A talk based
  on the material in this paper is accepted for presentation at the 
EuroComb'11 conference.}} 
\author{
{\bf J\'anos K\"orner}\thanks{Email: korner@di.uniroma1.it}\\
Department of Computer Science, Sapienza University\\
Rome, Italy 
\and   
{\bf Silvia Messuti}\thanks{Email: silvia.messuti@gmail.com}\\
Department of Computer Science, Sapienza University\\
Rome, Italy
\and
{\bf G\'abor Simonyi}\thanks{Email: simonyi@renyi.hu}\\
R\'enyi Institute of Mathematics\\
Budapest, Hungary} 
\date{}


\begin{titlepage}
\maketitle
\thispagestyle{empty}

\begin{abstract}
Let $\mathbb{D}\subseteq \Nat$ be an arbitrary subset of the natural numbers. For every $n$, let $M(n, \mathbb{D})$ be the maximum of the 
cardinality of a set of Hamiltonian paths in the complete graph $K_n$ such that the union of any two paths from the family contains a not necessarily induced cycle of some length from $\mathbb{D}$. We determine or bound the asymptotics of $M(n, \mathbb{D})$ in various special cases. This problem is closely related to that of the permutation capacity of graphs and constitutes a further extension of the problem area around Shannon capacity. We also discuss how to generalize our cycle-difference problems and present an example where cycles are replaced by 4-cliques. These problems are in a natural duality to those of graph intersection, initiated by Erd\H os, Simonovits and S\'os. The lack of kernel 
structure as a natural candidate for optimum makes our problems quite challenging.
\end{abstract}
\end{titlepage}


\section{Introduction}\label{intro}
Apart from scattered examples cited in \cite{SiS}, the interest in intersection problems in combinatorics begins with the seminal paper \cite{EKR}.
Graph intersection problems have been studied in a series of articles by 
Simonovits and S\'os, starting with \cite{MV}. Their famous conjecture from 1976 states that the largest family of subgraphs of $K_n$ such that 
any two of them contain a triangle in their intersection, is unique (up to isomorphism) and is the family consisting of all the subgraphs containing a fixed 
triangle. This beautiful and long--standing conjecture was just proved by Ellis, Filmus and Friedgut \cite{EFF}. The unique optimum for this problem is a 
{\em kernel structure}, consisting of all the graphs containing a ``common kernel'', the fixed triangle. 

In an intersection problem we are faced with a set on which a similarity relation is defined and we are looking for the largest cardinality of a subset any two members of which are in the given relation. We say that a relation is of {\em similarity}, if it is reflexive and for its validity between two elements it is sufficient that it be valid for some projection of the two. (We will call this {\em local verifiability}). In fact, graphs having a triangle in their intersection must contain a triangle (reflexivity) and further, if two of their subgraphs, induced by the same set of vertices (projection) contain a triangle in their intersection, then so do the two graphs. A kernel structure can be defined 
in the same manner for any relation of similarity, and it often gives the solution for the corresponding extremal problem. 
For instance, a family of subgraphs of $G$ such that any two of them contain a path of length $3$ in their intersection would have cardinality $\frac{1}{2^{3}}\cdot2^{|E(G)|}$. However, for this problem the kernel structure does not give the optimal solution, since an intersecting family of size $\frac{17}{128}\cdot2^{|E(G)|}$ is exhibited in \cite{C}.

In contrast to the above, for the extremal problems we are about to discuss here, no kernel structure can be defined. Just as with intersection problems, we 
consider a binary relation on a finite set and want to determine the largest cardinality of a subset any two elements of which are in the relation. 
However, for each of our problems, the defining relation is what we call a {\em diversity} relation. We will say that a relation is of diversity, if it is irreflexive and  locally verifiable. Such problems abound in information theory. A primary example is Shannon's graph capacity \cite{S} that was the starting point for a line of research concerning permutations \cite{KM}.


\section{Paths and cycles}

In a series of papers beginning with \cite{KM} the maximum cardinality of sets as above, for various diversity relations between  permutations defined on $[n]$ is studied. We now turn to such problems for graphs. A permutation of $[n]$ can be represented as a directed Hamiltonian path in $K_n$ in the usual manner. However, whereas the problems treated in the series of papers from \cite{KM} to \cite{sev} are closely related to graph capacity and in fact, generalize that concept, here we are dealing with relations that have their natural formulations only in purely graph--theoretic terms. Similar problems can be defined for 
families of subgraphs of $K_n$ other than paths.

Let $\Diff\subseteq\Nat$ be an arbitrary subset (finite or infinite) of the natural numbers. We will say that two undirected Hamiltonian paths in $K_n$ are $\Diff$--cycle different, if the union of the two paths contains a cycle of length from $\Diff$. We denote by $M(n, \Diff)$ the largest cardinality of a set of pairwise $\Diff$--cycle different Hamiltonian paths in $K_n$. Obviously, if $\Diff=\Nat$, every two Hamiltonian paths are $\Nat$--cycle different, and we have 
$$M(n, \Nat)=\frac{n!}{2}.$$

It is a little bit less obvious that if $\Diff=\overline{2\Nat}$, i. e., if two paths are in relation when their union contains at least one cycle of odd length, then

\begin{prop}\label{odd}
$$M(n, \overline{2\Nat})={n \choose {\lfloor \frac{n}{2} \rfloor}}$$
if n is odd, and
$$M(n, \overline{2\Nat})=\frac{1}{2}{n \choose {\frac{n}{2}}}$$
if n is even.
\end{prop}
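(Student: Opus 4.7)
The plan is to recast the problem in terms of bipartiteness. A graph contains an odd cycle if and only if it is not bipartite, so two Hamiltonian paths $P_1,P_2$ in $K_n$ are $\overline{2\Nat}$-cycle different if and only if their union $P_1\cup P_2$ is non-bipartite. Each Hamiltonian path $P$ on $[n]$ is connected and bipartite, hence admits a unique (up to swapping the two colour classes) proper $2$-colouring, obtained by alternating colours along the path. This yields a canonical unordered bipartition $\{A_P,B_P\}$ of $[n]$ with $|A_P|=\lceil n/2\rceil$ and $|B_P|=\lfloor n/2\rfloor$, and every edge of $P$ crosses this bipartition.

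The key claim to verify is that $P_1\cup P_2$ is bipartite if and only if $\{A_{P_1},B_{P_1}\}=\{A_{P_2},B_{P_2}\}$. One direction is immediate, since both paths then lie inside the corresponding complete bipartite graph. For the other direction, any $2$-colouring witnessing bipartiteness of the union restricts to a proper $2$-colouring of each $P_i$, and by uniqueness of the $2$-colouring of a connected bipartite graph it must coincide, up to a global swap, with the alternation-colouring of $P_i$; hence both paths share the same alternation-bipartition. Consequently, a family $\mathcal F$ of pairwise $\overline{2\Nat}$-cycle different Hamiltonian paths yields pairwise distinct alternation-bipartitions. Counting gives the upper bound: for odd $n$ the two parts of the bipartition have distinct sizes $(n+1)/2$ and $(n-1)/2$, so there are $\binom{n}{\lfloor n/2\rfloor}$ unordered bipartitions, while for even $n$ both parts have size $n/2$ and one must divide by two, giving $\tfrac{1}{2}\binom{n}{n/2}$.

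For the matching lower bound, I would, for each unordered bipartition $\{A,B\}$ of $[n]$ with part-sizes $\lceil n/2\rceil$ and $\lfloor n/2\rfloor$, exhibit one Hamiltonian path whose alternation-bipartition is exactly $\{A,B\}$: simply list the vertices of $A$ and $B$ alternately, starting and ending in the larger class when $n$ is odd. Such a path exists because $K_{\lceil n/2\rceil,\lfloor n/2\rfloor}$ is Hamiltonian-traceable. Taking one such path per bipartition produces a family of the required cardinality, and by the equivalence established above any two of its members have non-bipartite union, hence contain an odd cycle.

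There is no serious obstacle; the crux is spotting that $\overline{2\Nat}$-cycle-differentness is nothing but non-bipartiteness of the union, and that a Hamiltonian path uniquely determines its colour classes. The only care needed is the parity-dependent counting of unordered bipartitions, which is precisely what produces the factor $\tfrac12$ in the even case.
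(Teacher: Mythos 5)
Your proposal is correct and follows essentially the same route as the paper: identifying each Hamiltonian path with its unique almost balanced bipartition (the alternation colouring), showing the union is non-bipartite exactly when the bipartitions differ, and counting bipartitions with the parity-dependent factor of $\tfrac12$. Your write-up just makes explicit the uniqueness-of-$2$-colouring argument and the existence of a path for each bipartition, which the paper leaves implicit.
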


\begin{proof}
Every Hamiltonian path uniquely belongs to an almost balanced (i.e., with the two classes that have cardinalities differing by at most 1) complete bipartite subgraph of $K_{n}$. Further, two Hamiltonian paths whose union 
contains an odd cycle, cannot belong to the same bipartite subgraph. This establishes the number of almost balanced complete bipartite graphs in $K_{n}$ as an upper bound. 
On the other hand, if two paths determine different almost balanced complete bipartite subgraphs, then their union must contain an odd cycle.
\end{proof}

Clearly, we have $M(n, {3})\leq M(n, \overline{2\Nat})$. Can equality hold? (Such coincidences do occur, e. g. in the theorem of Mantel--Tur\'an, or 
in the case of triangle--intersecting graph families \cite{EFF}.)
In the case $n=5$ this equality is proven.
\begin{prop}
$$M(5,3)=M(5,\overline{2\Nat})=10.$$
\end{prop}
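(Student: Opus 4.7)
The upper bound $M(5,3)\leq M(5,\overline{2\Nat})=10$ is immediate from Proposition~\ref{odd}, since a triangle is in particular an odd cycle. The task therefore reduces to exhibiting a family of $10$ Hamilton paths in $K_5$ whose pairwise unions all contain a triangle.

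My plan hinges on a structural lemma. By Mantel's theorem, every triangle-free graph on five vertices has at most $\lfloor 25/4\rfloor=6$ edges, and the only edge-maximal triangle-free graphs on five vertices are $K_{2,3}$ (bipartite, six edges) and $C_5$ (non-bipartite, five edges, because adding any chord to $C_5$ produces a triangle). Hence the union $P\cup Q$ of two Hamilton paths $P,Q$ in $K_5$ is triangle-free if and only if either $P\cup Q\subseteq K_{2,3}$ for some almost-balanced bipartition of $[5]$, in which case $P$ and $Q$ share that bipartition, or $P\cup Q=C_5$, in which case $P$ and $Q$ are two Hamilton paths of this common cycle. Since every Hamilton path in $K_5$ lies in a unique Hamilton cycle (obtained by joining its two endpoints), two Hamilton paths are $\{3\}$-cycle different exactly when their bipartitions differ \emph{and} their ambient Hamilton cycles differ.

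The construction then becomes a system-of-distinct-representatives problem: for each of the $10$ almost-balanced bipartitions I select a Hamilton path, equivalently an ambient Hamilton cycle containing such a path, and require the $10$ chosen cycles to be distinct. A bipartition $\{a,b\}$ is realized inside a Hamilton cycle $C$ precisely when $\{a,b\}$ is a non-edge of $C$, so each bipartition is eligible for exactly $6$ of the $12$ Hamilton cycles, while each Hamilton cycle serves exactly $5$ bipartitions (one per non-edge). Hall's condition in this biregular bipartite graph is immediate: for any set $T$ of bipartitions, double counting gives $|N(T)|\geq 6|T|/5\geq|T|$. Hence a system of distinct representatives exists, and the resulting $10$ Hamilton paths are pairwise $\{3\}$-cycle different. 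The main conceptual work is the structural lemma; once it is in place the matching step is routine, and one could equally well replace it by an explicit list of ten Hamilton paths (for instance by removing a fixed complementary pair of cycles from the incidence graph and extracting a perfect matching in the remaining $5$-regular bipartite graph).
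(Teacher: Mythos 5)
Your proof is correct and follows essentially the same route as the paper's: the upper bound via Proposition~\ref{odd}, the structural observation that a triangle-free union forces the two paths into a common $C_5$ or a common $K_{2,3}$, and Hall's theorem applied to the biregular bipartite incidence graph between the twelve $C_5$'s and the ten $K_{2,3}$'s of $K_5$. (One harmless slip: $K_{1,4}$ is also an edge-maximal triangle-free graph on five vertices, but since it contains no Hamilton path your structural lemma is unaffected.)
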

\begin{proof}
The upper bound $M(5,3)\leq 10$ follows from Proposition \ref{odd}. 
As for the lower bound, observe that every Hamiltonian path is uniquely contained as a subgraph both in a $C_{5}$ and also in a complete bipartite graph $K_{2,3}$. We define a bipartite graph whose vertices correspond to the $C_{5}$ and $K_{2,3}$ subgraphs of $K_{5}$ so that two vertices are adjacent if one is a $C_{5}$, the other is a $K_{2,3}$ and the two graphs intersect in a Hamiltonian path. Note that a family of triangle--different Hamiltonian paths is characterized by not containing two paths from the same $C_{5}$ nor from the same $K_{2,3}$. Hence, we are done if we show that our bipartite graph contains 10 pairwise vertex-disjoint edges. But this is immediate by Hall's theorem, since our graph is biregular.
\end{proof}

Somewhat surprisingly, the situation is quite different if we require even cycles. It is immediately obvious that super--exponentially large families of pairwise even--cycle--different paths exist, since all the Hamiltonian paths contained in the same almost balanced complete bipartite graph form such a family. 
We will show that these families are still far from being optimal.

\begin{lemma}
A graph containing two cycles which share a unique path necessarily contains an even cycle.
\end{lemma}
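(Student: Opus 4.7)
The plan is to exhibit three cycles in the graph whose lengths sum to an even number, forcing at least one to be even. Let $C_1$ and $C_2$ be the two given cycles and $P$ their common path, with endpoints $u$ and $v$ and length $\ell$. Since each $C_i$ is a cycle containing $P$, I can write $C_i = P \cup P_i$, where $P_i$ is the complementary arc of $C_i$, that is, a path from $u$ to $v$ whose interior is disjoint from $P$. Set $a = |P_1|$ and $b = |P_2|$, so $|C_1| = a + \ell$ and $|C_2| = b + \ell$.

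The next step is the key structural observation: $P_1$ and $P_2$ must be internally disjoint. This is where the uniqueness of the shared path is used. Reading the hypothesis as $C_1 \cap C_2 = P$ (as subgraphs), any vertex common to $C_1$ and $C_2$ already lies in $P$; in particular, no internal vertex of $P_1$ can coincide with one of $P_2$. Consequently $P_1 \cup P_2$ is itself a cycle $C_3$ of the graph, of length $a + b$.

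The three cycles $C_1, C_2, C_3$ have lengths $a + \ell$, $b + \ell$ and $a + b$, which sum to $2(a + b + \ell)$. Since this total is even, an even number of the three lengths are odd, so either all three are even or exactly one of them is. In either case the graph contains a cycle of even length, which is the desired conclusion.

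The main subtlety I anticipate lies not in the parity count but in justifying the internal disjointness of $P_1$ and $P_2$ from the informal phrase \emph{share a unique path}. I would want to confirm that the intended reading is $C_1 \cap C_2 = P$ as subgraphs, ruling out additional common vertices off $P$; under that interpretation the disjointness is immediate and no further case analysis is needed. A secondary minor check is that $a, b \geq 1$ so that $C_3$ is a genuine cycle, which follows from $C_1 \neq C_2$ together with $C_1 \cap C_2 = P$.
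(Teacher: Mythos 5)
Your proof is correct and follows essentially the same route as the paper: both identify the third cycle obtained by deleting the shared path (your $P_1\cup P_2$, of length $l_1+l_2-2s$ in the paper's notation) and conclude by a parity count on the three cycle lengths. Your explicit justification of why $P_1\cup P_2$ is a genuine cycle is a welcome bit of extra care that the paper leaves implicit.
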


\begin{proof}
If one of the two cycles has even length, then the fact is obviously true, so suppose they both have odd length. It is easy to see that a third cycle lies in the graph and it is obtained by omitting from the graph the edges and the inner vertices of the shared path. Hence, if the two cycles have length $l_{1}$ and $l_{2}$, respectively, and the shared path contains $s$ edges, then the third cycle has length
$$l_{1}+l_{2}-2s\equiv 0\,(\textrm{mod}\,2).$$
\end{proof}


\begin{theorem}\label{gre}
$$\frac{n!}{2[n-3+{\lceil\frac{n}{2}\rceil+1 \choose 2 }]} \leq M(n, 2\Nat)\leq \frac{n!}{2n}$$
if $n$ is odd, and
$$\frac{n!}{2{\frac{n}{2}+1 \choose 2 }} \leq M(n, 2\Nat)\leq \frac{n!}{8}$$
if $n$ is even.
\end{theorem}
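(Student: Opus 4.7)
My approach separates the two inequalities, and the upper and lower halves need rather different ingredients.

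For the upper bounds I would work in the ``conflict graph'' $\mathcal{C}$ on the $n!/2$ Hamiltonian paths of $K_n$, with an edge between two paths exactly when their union contains no even cycle; then $M(n,2\Nat)=\alpha(\mathcal C)$, so any clique partition of $V(\mathcal C)$ yields an upper bound. For $n$ odd I would use the Hamiltonian cycles of $K_n$: for each cycle $C$ the $n$ paths $C\setminus\{e\}$ pairwise have union equal to $C$, which is odd, and every Hamiltonian path extends uniquely to a Hamiltonian cycle by joining its endpoints. This gives a partition into $(n-1)!/2$ cliques of size $n$ and the bound $M\le n!/(2n)$. For $n$ even the cycle itself is even, so this fails; instead I would attach to each path $P=p_1\cdots p_n$ the ``twist graph'' $G_P:=P\cup\{p_1p_3,\,p_{n-2}p_n\}$. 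For $n\ge 6$, $G_P$ is two vertex-disjoint triangles joined by the path $p_3\cdots p_{n-2}$, whose only cycles are these two odd triangles; a direct check shows $G_P$ admits exactly four Hamiltonian paths, forming the $\mathbb Z_2\times\mathbb Z_2$-orbit of $P$ under the involutions $\sigma_1$ swapping $(p_1,p_2)$ and $\sigma_2$ swapping $(p_{n-1},p_n)$, and $G_P$ depends only on this orbit. These four-element orbits partition $V(\mathcal C)$ into $n!/8$ cliques, giving $M\le n!/8$.

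For the lower bounds I would construct an explicit family $\mathcal F$ of pairwise $2\Nat$-cycle different Hamiltonian paths. The key observation is that two Hamiltonian paths lying in a common bipartite subgraph of $K_n$ automatically have bipartite, hence purely even-cycled, union whenever their union contains any cycle at all, which it must whenever the two paths differ. Fixing a near-balanced bipartition $(A,B)$ with $|A|=\lceil n/2\rceil$ and $|B|=\lfloor n/2\rfloor$, the Hamiltonian paths of $K_{|A|,|B|}$ already give a family of $|A|!\cdot|B|!$ compatible paths, which falls short of the target. I would enlarge it by admitting each path a controlled ``defect'' edge drawn from a prescribed template of chords on a fixed small vertex set, sized so that the total count matches the denominator $2\binom{\lceil n/2\rceil+1}{2}$, with the extra $2(n-3)$ summand in the odd case absorbing the parity obstruction that forces any Hamiltonian path on an odd vertex set to use at least one monochromatic edge. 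Verifying that any two paths in the enlarged family still share a bipartite cycle in their union requires that the defect template be small enough that the bipartite skeleton dominates each pairwise union.

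The main obstacle is the lower bound: pinning down the defect template that exactly realises the stated denominator \emph{and} preserves the pairwise even-cycle guarantee is the combinatorial heart of the argument, since allowing any defect immediately opens up the possibility of odd-only cactus unions. The upper bounds, by contrast, are clean partition arguments once the correct local graph ($C$ for odd $n$, $G_P$ for even $n\ge 6$) has been identified.
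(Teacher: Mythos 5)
Your two upper bounds are correct and are essentially the paper's own arguments. For odd $n$ the paper likewise partitions the $n!/2$ Hamiltonian paths into the $(n-1)!/2$ Hamiltonian cycles, each a clique of size $n$ in your conflict graph; for even $n$ its sets $S(H)=\{H,H_l,H_r,H_{lr}\}$ (swap the first two and/or the last two vertices) are exactly your four-element twist orbits, and your observation that the orbit is determined by the cactus $G_P$, whose only cycles are the two end-triangles, is if anything a cleaner way to justify that each orbit is a clique and that the orbits partition the vertex set.

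The lower bound, however, is a genuine gap: you have no construction, only the intention of finding one. Your starting family, the Hamiltonian paths of a fixed almost balanced $K_{\lceil n/2\rceil,\lfloor n/2\rfloor}$, is the very example the paper dismisses as ``far from being optimal'': it has size $\lceil n/2\rceil!\,\lfloor n/2\rfloor! = n!/\binom{n}{\lfloor n/2\rfloor}$, which is of order $n!\sqrt{n}\,2^{-n}$, whereas the target is of order $n!/n^2$. The multiplicative deficit is $2^{n}$ up to polynomial factors, and admitting each path one ``defect'' edge from a template on a fixed small vertex set can only gain a polynomial factor (a uniformly random Hamiltonian path has about $n/2$ non-crossing edges with respect to any fixed balanced bipartition, so paths with $O(1)$ defects are an exponentially small fraction of all paths). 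So the plan cannot bridge the gap, and the denominators in the theorem are not the size of any template. The paper proves the lower bound by an entirely different, greedy argument: it shows that for every Hamiltonian path $H$ the number of paths $H'$ (including $H$ itself) such that $H\cup H'$ contains no even cycle is exactly $n-3+\binom{\lceil n/2\rceil+1}{2}$ for odd $n$ and $\binom{n/2+1}{2}$ for even $n$ --- this is the combinatorial heart, carried out by classifying all such $H'$ against $H=(1,2,\dots,n)$ with the help of the lemma that two cycles sharing a unique path force an even cycle --- and then repeatedly selects a surviving path and deletes its small conflict set, yielding a family of size at least $\frac{n!/2}{|\overline{H}|}$, which is precisely the stated bound. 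If you want to salvage your write-up, replace the ``defect template'' idea by this degree count plus greedy selection.
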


\begin{proof}
Let $\mathcal{H}_{n}$ be the set of all the Hamiltonian paths in $K_{n}$. The following greedy procedure constructs a family $\mathcal{F}$ of $2\Nat$--cycle different paths:
At each round, select an arbitrary $H$ from the remaining part of $\mathcal{H}_{n}$ and let $\overline{H}$ be the set of all the paths $H'$ in $\mathcal{H}_{n}$, including $H$ itself, such that $H\cup H'$ does not contain even cycles. Then, add $H$ to $\mathcal{F}$ and delete all the paths in $\overline{H}$ from $\mathcal{H}_{n}$. 

The procedure is correct. Indeed, let $H_{i}$ and $H_{j}$ be the paths added to $\mathcal{F}$ at round $i$ and $j$ respectively, with $i<j$, and suppose that $H_{i}\cup H_{j}$ does not contain even cycles. This means that $H_{j}$ would be included in $\overline{H_{i}}$ at round $i$, hence it would be deleted from $\mathcal{H}_{n}$ at the end of round $i$, thus it could not be selected at round $j$.

To prove that the constructed family has the stated cardinality, we show that for arbitrary $H \in \mathcal{H}_{n}$ 
$$|\overline{H}|= n-3+{\lceil\frac{n}{2}\rceil+1 \choose 2 }$$ if $n$ is odd, and
$$|\overline{H}|= {\frac{n}{2}+1 \choose 2}$$ if $n$ is even.

For the sake of simplicity let $H=(1,2,\dots,n)$. We want to know which edges in $K_{n}$ are allowed to belong to a path $H'$ such that $H\cup H'$ does not contain even cycles. Surely $H'$ has an edge $\{x,y\}$ such that $x+1<y$. This defines a cycle $C=(x,x+1,\dots,y,x)$ in $H\cup H'$ and since $C$ must have odd length, we have $y-x\equiv 0\,(\textrm{mod}\,2)$. 

We partition the remaining vertices as follows:
\begin{itemize}
\item $head=\{1,2,\dots,x-1\}$
\item $cycle=\{x+1,\dots,y-1\}$
\item $tail=\{y+1,\dots,n\}$
\end{itemize}
and we consider an arbitrary edge $\{i,j\}$ in $H'$. Suppose $i+1<j$, then $(i,i+1,\dots,j,i)$ is a cycle in $H\cup H'$ which shares a path with $C$ in the following cases (thus producing an even cycle):
\begin{enumerate}[i.]
\item\label{ihjc} if $i\in head$ and $j\in cycle$ then the shared path is $(x,\dots,j)$  
\item\label{icjt} if $i\in cycle$ and $j\in tail$ then the shared path is $(i,\dots,y)$
\item\label{ihjt} if $i\in head$ and $j\in tail$ then the shared path is $(x,\dots,y)$
\item\label{icjc} if $i\in cycle$ and $j\in cycle$ then the shared path is $(i,\dots,j)$
\end{enumerate}

From (\ref{ihjc}), (\ref{icjt}) and (\ref{icjc}) it follows that a vertex in $cycle$ can be incident only to the edges in $H$, but since $\{x,y\}\in H'$ there must be exactly one vertex  $i^{*}\in\{x\}\cup cycle$ such that $\{i^{*},i^{*}+1\}\notin H'$. 

We have two cases:
\begin{enumerate}[1.]
\item If $y-x<n-1$ then $i^{*}$ must be one among $x$ and $y-1$ since else the cycle would be disconnected from the rest of the path. Suppose $\{x,x+1\}\in H'$ (the case for $\{y-1,y\}$ is symmetric). Then $y-1$ is an endpoint of $H'$ (from (\ref{ihjc}) and (\ref{icjt})), $x=1$ and $head=\emptyset$ since else $head$ would be disconnected from the rest of the path (from (\ref{ihjc}) and (\ref{ihjt})), and
$$H'=(y-1,y-2,\dots,1,y,tail).$$

The above considerations can be recursively applied to $tail$. Suppose there is an edge $\{x',y'\}$ with $x'+1<y'$ and $y'-x'\equiv 0\,(\textrm{mod}\,2)$, hence an odd cycle $C'=(x',x'+1,\dots,y',x')$ in $tail$. 
Then we get a new partition
\begin{itemize}
\item $head=V(C)\cup\{y+1,\dots,x'-1\}$
\item $cycle=\{x'+1,\dots,y'-1\}$
\item $tail=\{y'+1,\dots,n\}$
\end{itemize}
However, since $head$ contains the vertices in $C$, it cannot be empty, thus the other endpoint of $H'$ is $x'+1$. Furthermore, $y'=n$ and we get
$$H'=(y-1,y-2,\dots,1,y,\dots,x',n,n-1,\dots,x'+1).$$

To calculate the cardinality of $\overline{H}$, we have to determine which values for $y$ and $x'$ in $H'$ result in having only odd cycles in $H\cup H'$. Since $x=1$ and $y-x\equiv 0\,(\textrm{mod}\,2)$, we have $\lceil\frac{n}{2}\rceil$ possible values for $y$:
$$y\in\{1, 3, \dots,n\}\quad\textrm{if $n$ is odd}$$
$$y\in\{1, 3, \dots,n-1\}\quad\textrm{if $n$ is even}$$
Similarly, we have $y'=n$ and $y'-x'\equiv 0\,(\textrm{mod}\,2)$. Moreover, it must be $y\leq x'$, thus we have that the possible choices for $x'$ depend on the value of $y$
$$x'\in\{n,n-2\dots,y\}\quad\textrm{if $n$ is odd}$$
$$x'\in\{n,n-2\dots,1+y\}\quad\textrm{if $n$ is even}.$$
This gives 
$$|\overline{H}| \geq {\lceil\frac{n}{2}\rceil+1 \choose 2}. $$

\item If $y-x=n-1$ then $n$ is odd, $head=tail=\emptyset$ and $C$ is a Hamiltonian cycle, thus $i^{*}$ can be an arbitrary vertex in $\{x\}\cup cycle$. This gives additional $n-3$ paths in $\overline{H}$ thus we get 
$$|\overline{H}| = n-3+{\lceil\frac{n}{2}\rceil+1 \choose 2}$$ if $n$ is odd, and
$$|\overline{H}| = {\lceil\frac{n}{2}\rceil+1 \choose 2}$$ if $n$ is even.

\end{enumerate}

As for the upper bound, every Hamiltonian path uniquely belongs to a Hamiltonian cycle and two Hamiltonian paths whose union contains an even cycle cannot belong to the same Hamiltonian cycle if $n$ is odd. Since a Hamiltonian cycle contains $n$ Hamiltonian paths, we have $\frac{n!}{2n}$ as an upper bound.

If $n$ is even, for each Hamiltonian path $H$, one can construct a set $S(H)$ of Hamiltonian paths whose pairwise union contains only odd cycles in the following manner. For the sake of simplicity set $H=(1,2,\dots,n)$ once again. We define $S(H)=\{H,H_{l},H_{r},H_{lr}\}$ such that
$$H_{l}=(2,1,3,\dots,n-2,n-1,n)$$
$$H_{r}=(1,2,3,\dots,n-2,n,n-1)$$
$$H_{rl}=(2,1,3,\dots,n-2,n,n-1).$$

Note that this set cannot be enlarged. Indeed, consider two paths 
$$H_{i}=(2i,2i-1,\dots,1,2i+1,\dots,n)$$
and
$$H_{j}=(2j,2j-1,\dots,1,2j+1,\dots,n)$$
with $i<j$. Then $H_{i}\cup H_{j}$ contains an even cycle $(1,2i+1,\dots2j+1)$. The same applies if the cycle is located at the other endpoint.

It is easy to see that for arbitrary Hamiltonian paths $A$ and $B$ we have 
$$|\{H'\in \mathcal{H}_{n}:A\in S(H')\}|=|\{H'\in \mathcal{H}_{n}:B\in S(H')\}|$$
and that only one path in each set $S(H)$ can belong to an optimal family, hence we get
$$M(n, 2\Nat)\leq\frac{1}{4}\cdot\frac{n!}{2}=\frac{n!}{8}.$$

\end{proof}


It is interesting to investigate our problem for different cycle lengths. In particular, starting from the observation that the number of $2\Nat$--cycle different paths is superexponential while that of $\overline{2\Nat}$--cycle different paths is only exponential, we wondered whether the same would apply if we considered the Hamiltonian paths in $K_{n}$ the union of any two of which has a cycle of length divisible, respectively non-divisible, by $c>2$. Unlike for $c=2$, we have a superexponential bound for both cases.

\begin{theorem}\label{more}
For arbitrary $c \in \Nat$ there exists a $d \leq 1$ such that 
$$(dn)!\leq M(n, c\Nat).$$
\end{theorem}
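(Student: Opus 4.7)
The natural plan is to produce explicitly a family of at least $\lfloor n/c \rfloor!$ pairwise $c\Nat$-cycle different Hamiltonian paths in $K_n$, which gives the desired bound with $d = 1/c$. Assume for simplicity that $c$ divides $n$ and write $n = cm$; the case $c \nmid n$ would be handled by appending the $r < c$ leftover vertices in a fixed order at a fixed spot of every path of the construction below (for instance, by fixing $\pi(1)=1$ and prepending the leftover vertices). Partition the vertex set of $K_n$ into $m$ blocks $B_1, \dots, B_m$ of size $c$, with $B_j = \{v_{0,j}, v_{1,j}, \dots, v_{c-1,j}\}$. For each permutation $\pi \in S_m$, let $H_\pi$ be the Hamiltonian path that visits the blocks in order $B_{\pi(1)}, B_{\pi(2)}, \dots, B_{\pi(m)}$, each block $B_j$ traversed internally as $v_{0,j} \to v_{1,j} \to \cdots \to v_{c-1,j}$. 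Different permutations yield different undirected paths, since the between-block edges $\{v_{c-1,\pi(k)} v_{0,\pi(k+1)} : 1 \le k \le m-1\}$ encode the sequence $\pi$ uniquely.

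To show that any two such paths have a union containing a cycle of length divisible by $c$, I would contract each block $B_j$ to a single super-vertex $S_j$. Every between-block edge $v_{c-1,j} v_{0,k}$ of $K_n$ carries a natural direction $S_j \to S_k$ (from the $v_{c-1}$-endpoint to the $v_0$-endpoint), and $H_\pi$ becomes the directed super-path $P_\pi := S_{\pi(1)} \to S_{\pi(2)} \to \cdots \to S_{\pi(m)}$ on $m$ super-vertices. The key lemma I would prove is: if $\pi \neq \sigma$, then the directed union $P_\pi \cup P_\sigma$ contains a directed cycle. Take the smallest index $i$ with $\pi(i) \neq \sigma(i)$, and set $a = \pi(i)$, $b = \sigma(i)$. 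Since $\pi$ and $\sigma$ agree on positions $1, \dots, i-1$, the element $a$ must appear at some position $j > i$ in $\sigma$ and $b$ at some position $j' > i$ in $\pi$. The concatenation
$$b = \sigma(i) \to \sigma(i+1) \to \cdots \to \sigma(j) = a \to \pi(i+1) \to \cdots \to \pi(j') = b$$
then traces a directed cycle using only super-edges of $P_\pi \cup P_\sigma$.

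Finally, any directed cycle $S_{j_1} \to S_{j_2} \to \cdots \to S_{j_L} \to S_{j_1}$ of length $L$ in the super-graph lifts to a cycle in $H_\pi \cup H_\sigma$ by realising each super-edge through its associated between-block edge, and each super-vertex $S_{j_r}$ through the shared within-block path $v_{0,j_r} \to v_{1,j_r} \to \cdots \to v_{c-1,j_r}$, which has length $c-1$ and lies in both $H_\pi$ and $H_\sigma$. The resulting cycle in $K_n$ has length $L(c-1) + L = Lc$, divisible by $c$, so $H_\pi$ and $H_\sigma$ are $c\Nat$-cycle different. Consequently $M(n, c\Nat) \ge m! \ge \lfloor n/c \rfloor!$, giving the theorem with $d = 1/c$. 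The main obstacle is the super-graph cycle lemma, but the first-differing-position argument disposes of it cleanly, after which the lifting and counting are routine.
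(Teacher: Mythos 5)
Your construction is the same as the paper's: the paper builds its family by permuting the $n/c$ blocks of a fixed partition of the vertex set into vertex-disjoint paths on $c$ vertices (phrased there as a perfect matching whose edges are subdivided by $c-2$ extra vertices each), and a cycle using $k$ blocks and $k$ linking edges is counted exactly as in your lift, giving length $ck$. The one step of yours that does not hold as written is the claim that the concatenation $\sigma(i)\to\cdots\to\sigma(j)=a\to\pi(i+1)\to\cdots\to\pi(j')=b$ traces a directed cycle: it is only a closed directed walk, and its two segments can share interior super-vertices. For instance, with $\pi=(1,3,2)$ and $\sigma=(2,3,1)$ your walk is $2\to 3\to 1\to 3\to 2$, which visits $3$ twice. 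This is easily repaired: every closed directed walk containing at least one arc contains a directed cycle (or, more slickly, if $P_\pi\cup P_\sigma$ were acyclic it would admit a topological order, and a directed Hamiltonian path determines that order uniquely, forcing $\pi=\sigma$), and a directed cycle of any length $L\ge 2$ lifts to a simple cycle of length $Lc$ exactly as you describe, since the $L$ blocks involved are distinct. The paper sidesteps the issue by choosing the first block $x$ such that block $x+1$ precedes block $x$ in the second path; the segment of the second path from block $x+1$ to block $x$, closed off by the linking edge of the first path, is a simple cycle outright with equally many block-traversals and linking edges. With that one-sentence repair your argument is complete and yields the same bound $(n/c)!\le M(n,c\Nat)$ as the paper.
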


\begin{proof}
We describe a base construction which provides a weak lower bound for $c=2$ but is easily generalized for every $c>2$.

Let $M$ be a fixed perfect matching in $K_{n}$ (suppose $n$ is even for the sake of simplicity) and $(A,B)$ a partition of the vertices such that every edge $e=\{a_{e},b_{e}\}$ in $M$ has an endpoint in $A$ and an endpoint in $B$. A Hamiltonian path in $K_{n}$ can be constructed by choosing a permutation of the edges in $M$ and joining the $B$-vertex of each edge with the $A$-vertex of the subsequent. We say that an edge is a $matching$ edge if it belongs to $M$, while it is a $linking$ edge if it joins two $matching$ edges. 

It is easy to see that two arbitrary distinct Hamiltonian paths resulting from this construction have only even cycles in their union since this is a bipartite graph. This gives
$$(\frac{1}{2}n)!\leq M(n, 2\Nat).$$
To extend this construction for every $c>2$ we need to show that the union graph contains at least one even cycle with exactly the same number of $matching$ edges and $linking$ edges. 

Let $H$ and $H'$ be two paths constructed in the above manner. We label the $matching$ edges $1,2,\dots,n/2$ as they appear in $H$, thus we have 
$$H=a_{1},b_{1},a_{2},b_{2},\dots,b_{n/2-1},a_{n/2},b_{n/2}.$$
Since $H'$ results from a different permutation, there must be at least one $matching$ edge $x$ such that $x+1$ precedes $x$ in $H'$. Then, the subpath between $x+1$ and $x$ in $H'$, together with the $linking$ edge $\{b_{x},a_{x+1}\}$ in $H$, gives the cycle 
$$C=(a_{x+1},b_{x+1},\dots,a_{x},b_{x},a_{x+1})$$ 
in $H\cup H'$. Since the subpath begins and ends with $matching$ edges, we have that their number in $C$ is exactly the same as that of the $linking$ edges. 

That being stated, we can get a family of $c\Nat$--cycle different Hamiltonian paths by partitioning the set of the vertices in $n/c$ pairwise disjoint paths to replace the $matching$ edges in the above construction (suppose $c|n$ for the sake of simplicity). This can be pictured like inserting $c-2$ vertices on each $matching$ edge, thus a cycle with $k$ $matching$ edges and $k$ $linking$ edges enlarges its length to
$$k+k+(c-2)k=ck\equiv 0\,(\textrm{mod}\,c)$$
and we get
$$(\frac{1}{c}n)!\leq M(n, c\Nat)$$
for every value of $c$.
\end{proof}

\begin{theorem}\label{notmore}
For arbitrary integer $c>2$ there exists a $d^*  \leq1$ such that 
$$(d^*n)!\leq M(n, \overline{c\Nat}).$$
\end{theorem}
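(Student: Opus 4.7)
The plan is to mirror the base construction of Theorem~\ref{more}, but with segments sized to produce cycle lengths that are generically \emph{not} divisible by $c$. Partition $V(K_n)$ into $m = n/(c+1)$ pairwise disjoint paths (\emph{segments}), each on $c+1$ vertices (assuming $(c+1) \mid n$ for simplicity; the general case differs only by a constant factor). For each permutation $\pi$ of $[m]$, form a Hamiltonian path $H_\pi$ by concatenating the segments in order $\pi(1),\pi(2),\ldots,\pi(m)$ and joining consecutive segments via linking edges between their endpoints, exactly as in Theorem~\ref{more}. This yields a base family of $m! = (n/(c+1))!$ Hamiltonian paths.

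First I would work out the cycle lengths in $H_\pi \cup H_{\pi'}$. Any cycle in this union visits some $k \geq 2$ segments; for each visited segment the cycle either traverses it fully (using $c$ segment edges and $c+1$ vertices) or passes through only one of its endpoints via two distinct linking edges (contributing a single vertex). If the cycle fully traverses $t$ of the $k$ visited segments, then its length is $t(c+1) + (k-t) = tc + k \equiv k \pmod c$. Hence the cycle has length divisible by $c$ if and only if $c \mid k$; in particular, any cycle with $2 \leq k \leq c-1$ works.

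To ensure that such a good cycle exists in every pairwise union, I would restrict to a subfamily $\mathcal{F}$ of permutations, for instance those with $\pi(1) = 1$, giving $(m-1)! = \Omega((n/(c+1))!/n)$ paths, still of the form $(d^*n)!$ for some $d^* > 0$. For any two distinct $\pi, \pi' \in \mathcal{F}$, one seeks a cycle in the contracted multigraph $P_\pi \cup P_{\pi'}$ on $[m]$ of length in $\{2, 3, \ldots, c-1\}$. A natural source is a \emph{reversed adjacency}: if some unordered pair $\{i,j\}$ is consecutive in both $\pi$ and $\pi'$ but in opposite orders, the two associated linking edges have distinct endpoints (one is $\{b_i,a_j\}$, the other $\{b_j,a_i\}$), producing a multigraph digon that corresponds to a cycle with $k=2$ in $H_\pi \cup H_{\pi'}$ of length $2(c+1) \equiv 2 \pmod c$, nonzero since $c > 2$. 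The only way to avoid every reversed adjacency is a rotational-type relationship between $\pi$ and $\pi'$, which fixing $\pi(1)=1$ is designed to forbid.

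The hard part will be the combinatorial lemma asserting that any two distinct $\pi, \pi' \in \mathcal{F}$ do yield either a reversed adjacency or, failing that, a short contracted-union cycle of length at most $c-1$ obtained from the earliest disagreement between $\pi$ and $\pi'$. If fixing $\pi(1)=1$ alone does not suffice to eliminate all obstructions, the restriction can be iteratively tightened (e.g., also fixing $\pi(2)$, or imposing a bounded-displacement condition) at only a polynomial cost in the size of $\mathcal{F}$, still producing a family of size $(d^*n)!$ for a suitable constant $d^* \in (0,1)$.
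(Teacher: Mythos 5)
Your overall strategy---replace the matching edges of Theorem~\ref{more} by longer fixed segments, permute the segments, fix the first one, and look for a cycle of length $\not\equiv 0 \pmod c$ in each pairwise union---is the same as the paper's, but your choice of segment size creates a genuine gap that the paper's choice is specifically designed to avoid. With segments on $c+1$ vertices, every fully traversed segment contributes $0 \bmod c$, so the residue of any cycle equals its total number of linking edges; you therefore need, for every pair $\pi\neq\pi'$, some cycle in the contracted union whose length is $\not\equiv 0\pmod c$. Your only concrete source of such a cycle is a reversed adjacency (a digon, $k=2$), and your claim that the absence of reversed adjacencies forces a ``rotational-type relationship'' is false: for example $\pi=(1,2,3,4,5)$ and $\pi'=(1,3,5,2,4)$ share no unordered adjacency at all, hence have no reversed adjacency, even though both start with $1$. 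The fallback---that the earliest disagreement yields a contracted cycle of length at most $c-1$, or that further polynomial restrictions repair matters---is exactly the unproven combinatorial content, and you flag it yourself as ``the hard part''; as stated it is not established, and the cycle arising at the first disagreement can pass through arbitrarily many segments, so its contracted length is not bounded by $c-1$ and its residue modulo $c$ is not under control.

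The paper sidesteps all of this by taking segments on $c$ vertices ($c-1$ edges each), so that a fully traversed segment \emph{together with the linking edge that follows it} contributes exactly $c\equiv 0\pmod c$. One then only has to count the \emph{excess} of linking edges over full segments. The cycle produced at the first point where $H$ and $H'$ diverge---namely $C=(b_x,a_y,b_y,\dots,a_{x+1},b_x)$, built from the $H'$-subpath running from $a_y$ to $a_{x+1}$ and closed off by the two linking edges $\{b_x,a_y\}$ of $H'$ and $\{b_x,a_{x+1}\}$ of $H$---always has exactly $k$ full segments and $k+2$ linking edges, hence length $ck+2\equiv 2\pmod c$ for \emph{every} pair, with no case analysis and no auxiliary lemma. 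If you want to salvage your version, the cleanest fix is to recalibrate the segment length so that the residue of your first-disagreement cycle is independent of $k$; that is precisely what the paper's proof does.
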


\begin{proof}
The above construction, with a slight variation, provides a family of Hamiltonian paths whose pairwise union contains at least one cycle of length $k\equiv 2\,(\textrm{mod}\,c)$.
Instead of considering all the permutations of the $matching$ edges, we fix the first one and vary the position of the other $n/2-1$, thus obtaining $(n/2-1)!$ paths.
Let $H$ and $H'$ be two such paths and label the $matching$ edges $1,2,\dots,n/2$ as they appear in $H$. 
Since $H'$ comes from a different permutation we have
$$H=a_{1},b_{1},\dots,a_{x},b_{x},a_{x+1},b_{x+1},\dots,a_{n/2},b_{n/2}$$
and
$$H'=a_{1},b_{1},\dots,a_{x},b_{x},a_{y},b_{y},\dots,a_{x+1},b_{x+1},\dots$$
with $y\neq x+1$. Thus $H\cup H'$ contains the cycle 
$$C=(b_{x},a_{y},b_{y},\dots,a_{x+1},b_{x}).$$
Note that $\{b_{x},a_{y}\}$ and $\{b_{x},a_{x+1}\}$ are both $linking$ edges, thus if the number of $matching$ edges in $C$ is $k$, that of the $linking$ edges is $k+2$. 

When we replace the $matching$ edges with pairwise vertex-disjoint paths of length $c$ as seen in the previous proof we get a cycle of length
$$(k+2)+k+(c-2)k=ck+2\equiv 2\,(\textrm{mod}\,c)$$
and a family with
$$(\frac{1}{c}n-1)!\leq M(n, c\Nat)$$
$\overline{c\Nat}$--cycle different paths for every value of $c$.
\end{proof}

A simpler construction provides a better lower bound for almost every value of $c$.

\begin{prop}
For arbitrary integer $c>1$ other than $2$ and $4$    
$$\frac{n!}{2{n \choose 2}}\leq M(n, \overline{c\Nat}).$$
\end{prop}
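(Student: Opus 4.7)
The plan is to take the simplest natural family: fix two distinct vertices $u,v\in V(K_n)$ and let $\mathcal{F}$ consist of all Hamilton paths whose endpoints are $u$ and $v$. Since such a path is determined by an ordering of the remaining $n-2$ vertices (modulo reversal, which gives the same undirected path), we have $|\mathcal{F}|=(n-2)!=n!/(2\binom{n}{2})$. The task is then to show that for every $c>1$ with $c\notin\{2,4\}$, the family $\mathcal{F}$ is $\overline{c\Nat}$-cycle different: any two distinct $H_1,H_2\in\mathcal{F}$ have a union $H_1\cup H_2$ containing a cycle of length not divisible by $c$.

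To carry this out, I would write $H_1=(u=p_0,p_1,\ldots,p_{n-1}=v)$ and view $H_1$ as a spanning tree of $H_1\cup H_2$. Each edge $e=\{p_i,p_j\}\in H_2\setminus H_1$ with $i<j$ then yields a \emph{fundamental cycle} $C_e$ of length $j-i+1$, and since $H_1\triangle H_2$ has even degree at every vertex but is nonempty, $|H_2\setminus H_1|\geq 2$. If some fundamental cycle has length $\not\equiv 0\pmod c$ we are immediately done, so I may assume every chord satisfies $j-i\equiv -1\pmod c$.

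Next I would split into cases based on the configuration of chords. If any two chords $e_1,e_2$ have nested intervals in $H_1$, their fundamental-cycle symmetric difference is a simple cycle in $H_1\cup H_2$ whose length simplifies, using the span congruence, to $\equiv 2\pmod c$; for $c\geq 3$ this is nonzero modulo $c$. If no two chords are nested, the key sub-case is $|H_2\setminus H_1|=2$. Here a short check shows that the only way two $H_1$-edges can be replaced to produce another Hamilton path from $u$ to $v$ is by reversing a contiguous middle block $p_a,\ldots,p_{b-1}$ of $H_1$; the two resulting chords cross with offset $1$, and $H_1\cup H_2$ then contains the $4$-cycle $p_{a-1},p_a,p_b,p_{b-1}$ formed from the two boundary $H_1$-edges and the two chords. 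Because $c\notin\{2,4\}$, this $4$-cycle has length $4\not\equiv 0\pmod c$.

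The remaining case is $|H_2\setminus H_1|\geq 3$ with no two chords nested, and this is where the main obstacle lies. A clean sub-step is that three pairwise crossing chord intervals cannot simultaneously give combined cycles divisible by $c\geq 3$: the three offset congruences $\alpha_{12}\equiv\alpha_{23}\equiv\alpha_{13}\equiv -1\pmod{c/\gcd(2,c)}$ together with $\alpha_{13}=\alpha_{12}+\alpha_{23}$ force $c\mid 1$, a contradiction. The hardest part, which I expect to require careful combinatorial bookkeeping, is to handle the remaining mixed configurations (where some chord pairs cross and some are disjoint) by forming symmetric differences of three or more fundamental cycles and showing that at least one such combination yields a simple cycle whose length is not divisible by $c$. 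The exclusion of $c=2$ and $c=4$ is both natural and necessary: for $c=2$, $\mathcal{F}$ contains pairs lying in a common bipartition so their union is bipartite, and for $c=4$ the block-reversal construction produces pairs whose union contains only $4$-cycles.
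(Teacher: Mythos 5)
Your construction is the same as the paper's (all Hamilton paths with two fixed endpoints, of which there are $(n-2)!=n!/(2\binom{n}{2})$), and your general strategy --- combining cycles of $H_1\cup H_2$ and doing arithmetic modulo $c$ --- is also the right one; the nested case and the two-chord (block-reversal) case are handled correctly. But there is a genuine gap, and you name it yourself: the case of three or more chords in mixed crossing/disjoint configurations is left entirely open, and it is not a routine bookkeeping step. As your own computation shows, two crossing chords whose spans are $\equiv -1\pmod c$ and whose left endpoints differ by $\alpha$ combine into a cycle of length $\equiv 2\alpha+2\pmod c$, which can perfectly well be $0$; so an unstructured search over pairs or triples of fundamental cycles has no guarantee of producing a cycle of forbidden length, and the argument as written does not close.

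The idea you are missing is a localization that forces the offset $\alpha$ to be $1$. The paper lets $x$ be the smallest vertex (in the $H_1$-order) incident to a non-shared edge and looks at exactly two chords: the purely-$H_2$ edge $\{x,y\}$ at $x$ and the purely-$H_2$ edge $\{x+1,z\}$ at $x+1$. The latter exists because $\{x,x+1\}$ is an edge of $H_1$ only (this is where the fixed endpoints are used, to control degrees when $x$ is an endpoint), so $x+1$ is also incident to a non-shared $H_2$-edge. The corresponding intervals $[x,y]$ and $[x+1,z]$ either nest or cross with offset exactly $1$, so the combined cycle has length $\equiv 2$ or $\equiv 4\pmod c$, and one is done for every $c\notin\{1,2,4\}$ with no global case analysis at all. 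Replacing your chord-configuration taxonomy by this single local step turns your outline into essentially the paper's proof.
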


\begin{proof}
Let us fix two vertices in $K_{n}$. The union of every two Hamiltonian paths having those vertices as endpoints contains at least two cycles which share a common path. If both of these cycles have length from ${c\Nat}$, we need to show that they yield a cycle whose length must be from $\overline{c\Nat}$.

Consider two Hamiltonian paths in $K_{n}$ having the same endpoints: the $blue$ path and the $red$ path. In their union graph $G$, we say an edge is $purple$ if it belongs to both paths, while it is $blue$ or $red$ if it belongs to that path only. Moreover, we say a vertex is $purple$ if it is incident to $purple$ edges only and we label the vertices $1,2,\dots,n$ as they appear in the $blue$ path.

Let $x$ be the smallest non-$purple$ vertex in $G$ and $\{x, y\}$ the $red$ edge incident to $x$. This defines a cycle in $G$, namely $(x,x+1,\dots,y-1,y,x)$. 
Also, $x+1$ is not a $purple$ vertex, because $\{x,x+1\}$ is a $blue$ edge since else $x$ would have $red$ degree $3$, thus there is at least one $red$ edge incident to $x+1$, say $\{x+1,z\}$, which defines a cycle $(x+1,x+2,\dots,z-1,z,x+1)$.

Suppose every cycle in $G$ has length from $c\Nat$. Thus 
$$y-x+1\equiv 0\,(\textrm{mod}\,c)$$
and
$$z-(x+1)+1\equiv 0\,(\textrm{mod}\,c)$$
must hold.

These two cycles share a subpath of the $blue$ path which contains $s$ edges and define a third cycle $C=(x,x+1,z,\dots,y,x)$ whose length is given by 
$$(y-x+1)+(z-(x+1)+1)-2s.$$

We will show that $C$ cannot have a length from $c\Nat$. Two cases are to be considered:
\begin{enumerate}[1.] 
\item
If $z\geq y$, then we have $s=y-(x+1)$ and $C$ has length 
$$(y-x+1)+(z-(x+1)+1)-(2y-2x-2)$$ 
which cannot be congruent to $0$ modulo $c$ since else we would get
$$y-x+1\equiv 2(y-x+1)-4\,(\textrm{mod}\,c)\quad \Rightarrow \quad 0\equiv -4\,(\textrm{mod}\,c).$$

\item
If $z<y$, then we have $s=z-(x+1)$, thus $C$ has length
$$(y-x+1)+(z-(x+1)+1)-(2z-2x-2)$$ 
which, again, cannot be congruent to $0$ modulo $c$ since else we would get
$$z-(x+1)+1\equiv 2[z-(x+1)+1]-2\,(\textrm{mod}\,c)\quad \Rightarrow \quad 0\equiv -2\,(\textrm{mod}\,c).$$
\end{enumerate}
\end{proof}


\section{Related problems}

The problem about cycles in the union of paths can be generalized in the following manner. Let $\mathcal{F}$ be a family of finite graphs and $G$ a finite graph such that $G$ has no 
subgraph (even not induced) isomorphic to any $F\in\mathcal{F}$. What is the maximum number of copies of $G$ among the subgraphs of $K_n$ such that the union of any two contains a copy of a graph from $\mathcal{F}$? 

For instance, one can consider the following problem: how many Hamiltonian paths can we have in $K_{n}$ if the union of any two of them must contain a $K_{4}$? 

\begin{prop}
Let $M(n, K_{4})$ be the largest cardinality of a set of Hamiltonian paths in $K_{n}$ such that the union of any two of them contains a $4$--clique. Then we have
$$2^{\frac{n}{4}}\leq M(n, K_{4})\leq (n+1)^{2}\Big(\frac{3}{2}\Big)^{n-1}.$$
\end{prop}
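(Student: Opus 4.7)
My plan is to exploit the decomposition of $K_4$ into two edge-disjoint Hamilton paths: on any four-vertex set $\{a,b,c,d\}$ the paths $\pi_0\colon a\,b\,c\,d$ and $\pi_1\colon c\,a\,d\,b$ are edge-disjoint and together use all six edges of $K_4$. I would partition $V(K_n)$ into $k=\lfloor n/4\rfloor$ quadruples $V_1,\dots,V_k$, with any leftover vertices grafted onto one end of the resulting path without affecting the count. For every binary string $\varepsilon\in\{0,1\}^k$ I construct a Hamilton path $H_\varepsilon$ by visiting the blocks in a fixed order, using the sub-path $\pi_{\varepsilon_i}$ inside block $V_i$, and inserting linking edges between appropriate endpoints of consecutive sub-paths (the endpoints depend on $\varepsilon_i$, but every such linking edge exists in $K_n$). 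If $\varepsilon\neq\varepsilon'$ differ in some coordinate $i$, then $H_\varepsilon|_{V_i}$ and $H_{\varepsilon'}|_{V_i}$ are the two complementary sub-paths $\pi_0,\pi_1$, so $H_\varepsilon\cup H_{\varepsilon'}\supseteq K_4[V_i]$. This yields $2^{\lfloor n/4\rfloor}$ pairwise $K_4$-related Hamilton paths.

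\textbf{Structural observation.} For the upper bound I would begin with the following remark: if $P\cup P'$ contains a $K_4$ on a $4$-set $S$, then, since $P$ and $P'$ are Hamilton paths (hence trees, with maximum degree $2$), each of $P|_S,P'|_S$ is a forest on $4$ vertices with maximum degree $2$ and so has at most three edges; to total the six edges of $K_4[S]$ each must contribute exactly three edges, forming edge-disjoint Hamilton paths on $S$. Moreover, a Hamilton path of $K_n$ has three edges on a fixed four-vertex subset only when those four vertices occupy four consecutive positions along the path. Hence the $K_4$-in-union relation is equivalent to a rigid local alignment condition: $S$ is a common four-vertex window of $P$ and $P'$, and $P|_S,P'|_S$ are the two complementary Hamilton paths of $K_4[S]$.

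\textbf{Upper bound via encoding.} The form $(n+1)^2(3/2)^{n-1}$ strongly suggests an injective encoding of $\mathcal{F}$ into a codespace of that size. I would try to map each $P\in\mathcal{F}$ to a triple $(x,y,w)$ where $x,y$ lie in an $(n+1)$-letter alphabet (two markers for the endpoints of $P$, augmented by a sentinel) and $w\in\{0,1,2\}^{n-1}$ records a per-edge ternary local invariant of $P$, with a natural $2^{n-1}$-fold symmetry (e.g.\ from edge-wise reading direction) reducing the effective codespace to $(n+1)^2(3/2)^{n-1}$. The local alignment condition of the previous paragraph is then what would drive injectivity on $\mathcal{F}$: two distinct members of $\mathcal{F}$ must share a complementary $4$-window on some $S$, and on that window the ternary labels of $P$ and $P'$ necessarily disagree.

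\textbf{Main obstacle.} The lower bound is a clean construction and the structural observation is essentially immediate. The real difficulty is engineering the ternary edge-label: it must carry just enough information that, together with the endpoint markers, it determines $P$ up to the postulated $2^{n-1}$ symmetry, and it must be rigid enough that the local alignment condition forces distinct members of $\mathcal{F}$ onto distinct codewords. I expect the bulk of the effort in completing a full proof to lie in calibrating this ternary labelling so that the $(3/2)^{n-1}$ savings come out exactly.
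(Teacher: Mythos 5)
Your lower bound is essentially the paper's argument: the same decomposition of $K_4$ into two edge--disjoint Hamiltonian paths, the same partition of the vertex set into quadruples, the same $2^{n/4}$ count. Your structural observation about complementary four--vertex windows is also correct (and not needed by the paper). The problem is the upper bound, which you do not prove: you only postulate an injective encoding and acknowledge that its key ingredient, the ternary edge--label, is not constructed. That is the whole content of the bound, so this is a genuine gap, not a detail. Moreover, the plan as described is unlikely to close: an injectivity argument needs an honest codespace, and $(n+1)^2(3/2)^{n-1}$ is not an integer, which already signals that the bound does not come from injecting $\mathcal{F}$ into a set of that size but from a density (double--counting) argument. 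Your guesses for the sources of the factors (endpoint markers for $(n+1)^2$, a reading--direction symmetry for $2^{n-1}$) do not correspond to anything that can be made to work in the form you describe.

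What the paper actually does is a fractional covering by almost balanced complete tripartite subgraphs of $K_n$. The union of two members of the family contains a $K_4$ and is therefore at least $4$--chromatic, so a complete tripartite graph, being $3$--chromatic, can contain at most one member of the family. Every Hamiltonian path lies in the same number of almost balanced complete tripartite graphs, and (say for $3\mid n$) each balanced complete tripartite graph contains at least $\left[\left(\frac{n}{3}\right)!\right]^3\cdot\frac{3\cdot 2^{n-1}}{(n+1)^2}$ Hamiltonian paths: order each colour class, then choose the transition pattern, a ternary string with no two consecutive equal letters; there are $3\cdot 2^{n-1}$ such strings, at most $(n+1)^2$ types, and the balanced type is the most frequent. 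Double counting then gives
$$M(n,K_4)\ \le\ \frac{n!}{2}\Big/\Big(\big[\big(\tfrac{n}{3}\big)!\big]^3\cdot\tfrac{3\cdot 2^{n-1}}{(n+1)^2}\Big)\ \le\ (n+1)^2\Big(\tfrac{3}{2}\Big)^{n-1},$$
using $n!\le 3^n\left[\left(\frac{n}{3}\right)!\right]^3$. So the $(3/2)^{n-1}$ arises as $3^{n-1}/2^{n-1}$ from the multinomial estimate against the count of transition strings, and the $(n+1)^2$ counts types of ternary strings. To complete your write--up you would either need to reproduce an argument of this kind or actually exhibit the labelling you hypothesize; the window--alignment observation alone does not yield the stated bound.
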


\begin{proof}
It is easy to see that for $n=4$ there are exactly two Hamiltonian paths which form a $K_{4}$, for instance $(1,2,3,4)$ and $(2,4,1,3)$. For $n>4$ we can partition the vertices into $4$--tuples and link together the subpaths, thus obtaining 
$$2^{\frac{n}{4}}\leq M(n, K_{4}).$$

For the upper bound, on the one hand we have that every Hamiltonian path is a subgraph of a unique almost balanced complete bipartite graph. On the other hand, the union of two Hamiltonian paths containing a $K_{4}$ is at least $4$--chromatic, so each almost balanced complete bipartite graph can contain at most one path of the optimal family. This gives
$$M(n, K_{4})\leq \frac{1}{2}{n \choose {\frac{n}{2}}}.$$

The upper bound can be improved by considering almost balanced complete tripartite graphs. Every Hamiltonian path is a subgraph of the same number of almost balanced complete tripartite graphs, but it still holds that every such graph contains at most one path of the optimal family, since its chromatic number is $3$. 
Hence, we need to lower bound the number of Hamiltonian paths contained in an almost balanced complete tripartite graph. 

For the sake of simplicity, consider the case of $3|n$. We fix a permutation of the $\frac{n}{3}$ elements in each class of the partition independently in $[(\frac{n}{3})!]^{3}$ many ways. For every such ordering we have to specify in which order we pass from one class to the other, and this can be represented by a string on the alphabet $\{a,b,c\}$ (where $a,b,c$ are the indices of the three classes) in which two adjacent symbols are distinct. These strings are $3\cdot2^{n-1}$, as for each coordinate after the first we have two choices. However, we are dealing with a balanced tripartite graph and each vertex must appear exactly once in the path, thus we have to consider only those strings in which an equal number of $a$, $b$ and $c$ occurs.

We define the $type$ of a string by the number of occurrences for each symbol. It is easy to see that there are at most $(n+1)^{2}$ types for a ternary string of length $n$, and that the type which contains the highest number of strings is, by symmetry, that with $\frac{n}{3}$ occurrences for each symbol. Hence, the number of paths in a balanced complete tripartite graph is lower bounded by
$$\Big[\Big(\frac{n}{3}\Big)!\Big]^{3}\cdot\frac{3\cdot2^{n-1}}{(n+1)^{2}}.$$
Therefore, we have
$$M(n, K_{4})\leq \frac{n!}{3[(\frac{n}{3})!]^{3}\cdot\frac{2^{n-1}}{(n+1)^{2}}}\leq(n+1)^{2}\Big(\frac{3}{2}\Big)^{n-1}$$
where the last inequality holds since
$$\frac{n!}{[(\frac{n}{3})!]^{3}}\leq 3^{n}.$$
\end{proof}

One can consider many further problems along the same lines. 


\end{document}